\documentclass[a4paper]{article}

\usepackage{
amsmath,
amsthm,
amscd,
amssymb,
wasysym
}
\usepackage{mathtools}
\usepackage{comment}
\usepackage{tikz}
\usetikzlibrary{positioning,arrows,calc}
\usepackage{xspace}

\setcounter{tocdepth}{3}
\usepackage{graphicx}

\usepackage[
            left=3cm,right=3cm,top=3cm,bottom=3cm,%
            ]{geometry}
            
\makeatletter
\newcommand{\specialcell}[1]{\ifmeasuring@#1\else\omit$\displaystyle#1$\ignorespaces\fi}
\makeatother

\usepackage{url}

\theoremstyle{plain}

\newtheorem{theorem}{Theorem}

\newtheorem{question}{Question}

\newtheorem*{problem*}{Problem}
\newtheorem{claim}{Claim}

\newtheoremstyle{derp}
{3pt}
{3pt}
{}
{}
{\bfseries}
{:}
{.5em}
{}
\theoremstyle{derp}
\newtheorem{example}{Example}

\newcommand*{\claimproofname}{Proof of claim}
\newenvironment{claimproof}[1][\claimproofname]{\begin{proof}[#1]}{\end{proof}}

\newcommand{\Z}{\mathbb{Z}}

\newcommand{\N}{\mathbb{N}}

\newcommand\xqed[1]{%
  \leavevmode\unskip\penalty9999 \hbox{}\nobreak\hfill
  \quad\hbox{#1}}
\newcommand\qee{\xqed{$\fullmoon$}}

\newcommand{\End}{\mathrm{End}}
\newcommand{\Aut}{\mathrm{Aut}}
\newcommand{\Hom}{\mathrm{Hom}}

\title{Conjugacy of transitive SFTs minus periodic points}

\author{
Ville Salo \\
vosalo@utu.fi
}

\begin{document}
\maketitle


\begin{abstract}
It is a question of Hochman whether any two one-dimensional topologically mixing subshifts of finite type (SFTs) with the same entropy are topologically conjugate when their periodic points are removed. We give a negative answer, in fact we prove the stronger result that there is a canonical correspondence between topological conjugacies of transitive SFTs and topological conjugacies between the systems obtained by removing the periodic points. 
\end{abstract}

\section{Introduction}

Entropy (in its various forms) is arguably the most important invariant of isomorphism in dynamics, and a great deal of effort has been put into the question of how good an invariant it is. A classical theorem of this type is the result of Adler and Weiss from 1967 that two measure-preserving transformations coming from automorphisms of a $2$-torus are isomorphic if and only if their entropies coincide \cite{AdWe67}. Famously, Ornstein proved in 1970 \cite{Or70} that entropy is a complete invariant for the isomorphism of Bernoulli shifts. The result of Ornstein has been extended to (and in fact directly covers), a large class of systems. In particular, Friedman and Ornstein generalized this result in \cite{FrOr70} to mixing Markov shifts (more generally to ``weak Bernoulli shifts'').

In the topological setting, the simplest imaginable analog of Ornstein's theorem is trivially true: two full shifts $A^\Z, B^\Z$ are topologically conjugate if and only if their topological entropies agree ($\log |A| = \log |B| \iff |A| = |B|$). However, the natural analog of the Friedman-Ornstein result fails: it is easy to find two topologically mixing SFTs with the same topological entropy which are not isomorphic in the sense of topological conjugacy (an example is given below). An interesting result of Adler and Marcus from 1979 states that topological entropy is a complete invariant for a weaker kind of isomorphism called almost topological conjugacy \cite{AdMa79,LiMa95}, see also \cite{BoBuGo06} for a generalization.

One way to make entropy a better invariant in the topological category is to remove a set of ``bad points'' from the subshift. The celebrated result of Keane and Smodinsky from 1979 \cite{KeSm79}, a finitary strengthening of the result of Ornstein \cite{Or70}, can be interpreted in these terms: it proves that two Bernoulli shifts of equal-entropy are topologically conjugate outside a set of measure zero (and the conjugacy additionally respects the measures). More precisely, for equal entropy Bernoulli shifts $X, Y$, there exist measure zero sets $\bar X, \bar Y$ such that $X \setminus \bar X$ and $Y \setminus \bar Y$ are topologically conjugate. Adler and Marcus' almost topological conjugacy in \cite{AdMa79} implies the analogous result for mixing SFTs $X, Y$ with the same topological entropy (where the measure used is the one of maximal entropy).

In \cite{Ho13}, Hochman studied this question for a large class of systems including (topological) countable-state mixing Markov shifts (which generalize mixing SFTs), and showed that when we pass to the \emph{free part}, i.e.\ remove the periodic points, entropy becomes a complete invariant for Borel isomorphism off null sets, i.e.\ Borel isomorphism after removing a set having measure zero for \emph{all} invariant measures. In \cite{BoBuGo14}, Boyle, Buzzi and G\'{o}mez showed that for the slightly more restricted class of SPR Markov shifts (still including all mixing SFTs), the removal of a null set is not necessary. In particular, the free parts of mixing SFTs with equal entropy are Borel isomorphic. Hochman generalizes this result in \cite{Ho19}, covering for example mixing sofic shifts.

In \cite[Problem~1.9]{Ho13}, Hochman posed the following problem, which asks if, even for topological conjugacy, it suffices to remove the periodic points to make entropy a complete invariant.

\begin{problem*}
Let $X$, $Y$ be topologically mixing SFTs on finite alphabets, and $h(X) = h(Y)$. Let $X'$, $Y'$ denote the sets obtained by removing all periodic points from $X$, $Y$ respectively. Is there a topological conjugacy between the (non-compact) systems $X'$ and $Y'$?
\end{problem*}

This can be interpreted as asking for a common generalization of the results of \cite{AdMa79} (homeomorphism after removing a measure zero set) and \cite{BoBuGo14} (Borel isomorphism of $X'$ and $Y'$). The question is also quoted in \cite{BoBuGo14,BoBuMc15,Ho19}.




After removing the periodic points, the topological spaces $X', Y'$ are no longer compact, or even locally compact (though they are $G_\delta$, thus Polish spaces). This makes their topological dynamics tricky, and a practical consequence for their symbolic dynamics is the failure of the Curtis-Hedlund-Lyndon theorem: shift-commuting continuous functions between subshifts minus their periodic points $X'$, $Y'$ are not necessarily defined by local rules, at least not by ones with finite radius (this already happens in the case where $X, Y$ are mixing SFTs, see Example~\ref{ex:NotForContinuousMaps}).

No computable invariant for topological conjugacy of topologically mixing SFTs is known. Since after removing the periodic points we have more potential morphisms to consider, one might expect that it is even more difficult to show that two systems are non-isomorphic. Indeed, before this paper it was open even whether the binary full shift (the vertex shift defined by the matrix $\begin{psmallmatrix} 1 & 1 \\ 1 & 1 \end{psmallmatrix}$) becomes isomorphic to the subshift of proper $3$-colorings of the Cayley graph of $\Z$ (i.e.\ the vertex shift defined by the matrix $\begin{psmallmatrix} 0 & 1 & 1 \\ 1 & 0 & 1 \\ 1 & 1 & 0 \end{psmallmatrix}$) when periodic points are removed. Note that these SFTs trivially have the same entropy $\log 2$, as for each of them the number of legal paths of length $n$ in the corresponding graph is $\Theta(2^n)$. This special case of the problem was posed by Hochman in the 2010 Pingree Park Dynamical Systems Workshop.

We prove the following theorem.

\begin{theorem}
\label{thm:Main}
Let $X, Y$ be infinite topologically transitive SFTs. Every topological conjugacy $\phi' : X' \to Y'$ is a domain-codomain restriction of a topological conjugacy $\phi : X \to Y$, and the choice of this $\phi$ is unique.
\end{theorem}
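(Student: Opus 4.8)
The plan is to realize the extension $\phi$ as a map whose graph is the closure of the graph of $\phi'$ inside the compact product $X \times Y$. First, uniqueness is essentially free and I would dispose of it immediately: in an infinite transitive SFT the periodic points are dense, so $X'$ is dense in $X$; any two continuous maps $X \to Y$ agreeing on the dense set $X'$ agree everywhere, and the identity $\phi \circ \sigma = \sigma \circ \phi$, valid on $X'$, persists on $X$ by continuity. Thus only existence requires work. I set $G = \{(x,\phi'(x)) : x \in X'\} \subseteq X \times Y$ and let $\bar{G}$ be its closure, a subshift (closed and $\sigma\times\sigma$-invariant). Since a transitive point of $X$ is non-periodic it lies in $X'$, so $G$ carries a point with dense orbit and $\bar{G}$ is transitive; both coordinate projections $\pi_X,\pi_Y$ are surjective, their images being closed sets containing the dense sets $X',Y'$.

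Next I would establish two clean statements about the fibers of $\pi_X$. Over any $x_0 \in X'$ the fiber is the single point $(x_0,\phi'(x_0))$: if $(a_n,\phi'(a_n)) \to (x_0,y)$ with $a_n \in X'$, then continuity of $\phi'$ at $x_0$ forces $y = \phi'(x_0)$. The symmetric statement holds for $\pi_Y$ via $\psi' = (\phi')^{-1}$. The second statement is the first essential use of the inverse: every point of $\bar{G}$ lying over a periodic $p$ is itself periodic. Indeed, if $(p,y) \in \bar{G}$ with $y \in Y'$, pick $a_n \in X'$ with $(a_n,\phi'(a_n)) \to (p,y)$; continuity of $\psi'$ at $y \in Y'$ gives $a_n = \psi'(\phi'(a_n)) \to \psi'(y) \in X'$, contradicting $a_n \to p \in \Per(X)$. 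Hence $\bar{G} = G \cup P$ with $P \subseteq \Per(X)\times\Per(Y)$, so over each periodic orbit $\bar{G}$ consists only of jointly $\sigma$-periodic points.

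What remains, and what I expect to be the \emph{main obstacle}, is to show each periodic fiber $F_p = \{y : (p,y)\in\bar{G}\}$ is a single point; equivalently, that $\phi'$ is uniformly continuous, whose failure I can localize exactly to periodic fibers of cardinality at least two. Everything above used only that $X,Y$ are transitive systems with dense periodic points, and the theorem genuinely fails for sofic shifts, so the SFT hypothesis must enter precisely here. My plan for the crux is to use free concatenation in a transitive SFT: approach $p$ through non-periodic points $x$ that are homoclinic to its orbit (equal to the orbit of $p$ outside a finite window, with a single non-periodic defect). For such an $x$ the forward orbit of $z = \phi'(x)$ accumulates only on $\bigcup_i F_{\sigma^i p}$, and since an $\omega$-limit set is internally chain transitive, I would argue (using that the SFT/Markov structure forces these fibers to be finite) that $\omega(z)$ is a single periodic orbit, so $z$ is forward asymptotic to one periodic orbit of $Y$ and thereby selects one element of $F_p$. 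The delicate part is to prove that this selection is independent of the homoclinic point and of the side of approach — here I would invoke injectivity of $\phi'$ to forbid two distinct homoclinic points from selecting different periodic images — which pins $F_p$ to a single value $\phi(p)$.

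Finally, once every fiber of $\pi_X$ is a singleton, $\bar{G}$ is the graph of a map $\phi : X \to Y$; its graph being closed and $X$ compact, $\phi$ is continuous, it restricts to $\phi'$ on $X'$, and $\phi\circ\sigma = \sigma\circ\phi$ holds on the dense set $X'$ and hence on $X$. Running the symmetric argument for $\psi'$ shows $\pi_Y$ also has singleton fibers, so $\phi$ is a bijection whose inverse is the continuous map obtained from the closed graph of $\psi'$. Therefore $\phi$ is a topological conjugacy $X \to Y$ extending $\phi'$, and by the first paragraph it is the unique such extension.
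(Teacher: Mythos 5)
Your skeleton coincides with the paper's: form the closure $\bar G$ of the graph of $\phi'$ in $X\times Y$, note that fibers over aperiodic points are singletons and that fibers over periodic points consist of finitely many periodic points, reduce everything to showing the periodic fibers are singletons, and finish with the closed-graph theorem. (Minor slip: $X'$ is dense because \emph{aperiodic} points are dense, not because periodic ones are.) The genuine gap is at exactly the step you flag as the crux, and the mechanism you propose there cannot work. You suggest that injectivity of $\phi'$ forbids two homoclinic points of $p$ from selecting different periodic orbits in the fiber $F_p$. But the paper's own even-shift example refutes this: the map rewriting $0^{2n}1$ to $(01)^n2$ is a genuine topological conjugacy $X'\to Y'$ between two sofic shifts, hence injective, and yet the fiber over $0^\Z$ contains both phases of $(01)^\Z$, precisely because homoclinic points whose defect sits at positions of different parity select different periodic images. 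So injectivity is not the tool; the SFT hypothesis must be deployed combinatorially inside this very step, and your sketch never actually uses the ``free concatenation'' you allude to. A secondary issue: $F_p$ consists of limits of $\phi'(x_n)$ along \emph{arbitrary} sequences $x_n\to p$ in $X'$, not only along homoclinic points, so even a consistent selection over homoclinic points would not by itself pin down the whole fiber.

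For comparison, here is what the paper does at this point, after reducing (via powers of $\sigma$) to $p=a^\Z$ with $F_p$ pointwise fixed and with long $a$-runs forced to map to constant runs. It uses compactness of the set $W$ of points with right tail $a^\N$ and a defect at position $-1$ to get \emph{uniform} continuity of $\phi'$ near $W$, which after another passage to a power of $\sigma$ yields a local rule: $\phi'(x)_1$ is a function of the defect symbol $x_{-1}$ alone. A symmetric rule holds for left tails $a^{-\N}$. If the two rules could output different symbols $b\neq c$, one glues a right-tail point to a shifted left-tail point using the fact that $a^\Z$ is synchronizing --- this is exactly where being an SFT enters --- and shift-commutation applied to the glued aperiodic point forces $b=c$, a contradiction. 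The single surviving symbol $b$ then propagates to the origin from whichever side the $a$-run of an approaching point breaks, giving $F_p=\{b^\Z\}$. To complete your write-up you would need an argument of this kind; the $\omega$-limit and chain-transitivity framing does not by itself distinguish the SFT case from the sofic case where the statement fails.
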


In particular, the classification of topologically transitive SFTs stays the same (i.e.\ it is classified by strong shift equivalence) even if one removes the periodic points from all of them. Intuitively, this theorem shows that the aperiodic points of an SFT ``remember'' the periodic ones. It is well known that two mixing SFTs can have the same entropy without being topologically conjugate -- a standard example is the pair of vertex shifts mentioned above. Thus our theorem answers Hochman's problem in the negative.

We initially proved the following theorem, to answer a technical question about $\Aut(X')$ by Nishant Chandgotia (who was interested in this group due to Hochman's problem). It is an easy corollary of Theorem~\ref{thm:Main}.

\begin{theorem}
\label{thm:AutIso}
Let $X$ be an infinite topologically transitive SFT. Then $\Aut(X)$ is isomorphic to $\Aut(X')$.
\end{theorem}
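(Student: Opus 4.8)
The plan is to deduce Theorem~\ref{thm:AutIso} directly from Theorem~\ref{thm:Main} by exploiting the uniqueness clause to build a group isomorphism. The key observation is that Theorem~\ref{thm:Main} applies in particular to the case $Y = X$, so every automorphism $\phi' \in \Aut(X')$ (a self-conjugacy of the non-compact system $X'$) extends uniquely to a topological conjugacy $\phi \in \Aut(X)$ of the full SFT, and conversely every $\phi \in \Aut(X)$ restricts to some $\phi' \in \Aut(X')$.

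First I would define the map $\Psi : \Aut(X') \to \Aut(X)$ sending each $\phi'$ to the unique $\phi$ extending it, as furnished by Theorem~\ref{thm:Main}. The map in the other direction, restriction $\rho : \Aut(X) \to \Aut(X')$ sending $\phi$ to $\phi|_{X'}$, is obviously well defined, since any homeomorphism of $X$ commuting with the shift permutes the periodic points among themselves and hence restricts to a self-homeomorphism of $X' = X \setminus \Per(X)$ that still commutes with the shift. The essential point is that $\Psi$ and $\rho$ are mutually inverse bijections: $\rho \circ \Psi = \ID$ because $\Psi(\phi')$ restricts back to $\phi'$ by construction, and $\Psi \circ \rho = \ID$ because $\Psi(\rho(\phi))$ and $\phi$ are both conjugacies of $X$ restricting to the same map $\phi|_{X'}$ on $X'$, so the uniqueness clause of Theorem~\ref{thm:Main} forces them to coincide.

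It then remains to check that $\rho$ (equivalently $\Psi$) is a group homomorphism. The cleanest route is to verify this for $\rho$: for $\phi, \psi \in \Aut(X)$ we have $\rho(\phi \circ \psi) = (\phi \circ \psi)|_{X'} = (\phi|_{X'}) \circ (\psi|_{X'}) = \rho(\phi) \circ \rho(\psi)$, since restriction of a composite to an invariant set is the composite of the restrictions. As $\rho$ is a bijective homomorphism, it is a group isomorphism, and hence $\Aut(X) \cong \Aut(X')$.

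I do not expect any genuine obstacle here, as all the real work is carried out in Theorem~\ref{thm:Main}; the only point requiring a moment's care is confirming that the domain of $\Aut(X')$ is exactly the class of maps Theorem~\ref{thm:Main} addresses, namely that an automorphism of the non-compact system $X'$ means precisely a topological conjugacy $\phi' : X' \to X'$ in the sense used there. Once that identification is made, the uniqueness in Theorem~\ref{thm:Main} does all the heavy lifting, turning the extension-restriction correspondence into a bijection that is automatically a homomorphism.
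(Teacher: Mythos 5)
Your proposal is correct and follows essentially the same route as the paper, which likewise takes the restriction map $f \mapsto f|_{X',X'}$ as the isomorphism, with surjectivity supplied by the extension statement of Theorem~\ref{thm:Main} and injectivity by its uniqueness clause. Your explicit construction of the inverse $\Psi$ is just a repackaging of that same argument.
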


As in Hochman's paper \cite{Ho13}, our main interpretation of these statements is in the two-sided case, and with this interpretation we prove the theorems under slightly weaker assumptions in Section~\ref{sec:TwoSided}. In Section~\ref{sec:OneSided} we prove a one-sided variant (where the shift action is only partial). In this setting, we obtain a stronger statement that applies to all shift-commuting continuous maps (and in the one-sided variant of Theorem~\ref{thm:AutIso} one therefore obtains $\End(X) \cong \End(X')$). 

In Section~\ref{sec:Examples} we show that the statement of Theorem~\ref{thm:Main} does not extend to topological conjugacies between two-sided mixing sofic shifts, or subshifts where the eventually periodic points are removed (for various meanings of the term). It also does not extend to general shift-commuting continuous maps between two-sided mixing SFTs, meaning there are indeed more morphisms in the category of free parts of mixing SFTs than there are in that of mixing SFTs, even though by our theorem the isomorphisms are exactly the same. The examples also show that for mixing sofic shifts $X, Y$, we \emph{can} have $X \not\cong Y$ yet $X' \cong Y'$.

Open problems are stated in Section~\ref{sec:Open}.

\section{Definitions}

We have $0 \in \N$, ${\subset}$ means the same as $\subseteq$ and $[a,b] = \{c \;|\; c \in \Z, a \leq c \leq b\}$. Let $A$ denote a finite set, called an \emph{alphabet}. The elements of $A$ are called \emph{symbols}. Write $A^* = \bigcup_{n \in \N} A^n$ for the set of all finite words over the alphabet $A$. These can be seen as the elements of the free monoid on the generating set $A$, and we write $uv$ for the product of words $u, v \in A^*$, i.e.\ their concatenation. One-sided (left- or right-) infinite words $u \in A^\N \cup A^{-\N}$ can also be concatenated with finite words, and the meaning should be clear. We index words (and one- or two-sided infinite words) with subscripts. 

See \cite{LiMa95} for a basic reference on symbolic dynamics. The set $A^\Z$ of two-way infinite words over a finite alphabet $A$ is called the \emph{full shift} (on alphabet $A$). Words that belong to a subshift are also called \emph{points}. The \emph{shift} $\sigma : A^\Z \to A^\Z$ defined by $\sigma(x)_i = x_{i+1}$ makes it a compact dynamical system. If $x \in A^\Z$, we write $u \sqsubset x$ if $x_{[i,i+|u|-1]} = u$ for some $i \in \Z$. If $X \subset A^\Z$ write $u \sqsubset X$ if $\exists x \in X: u \sqsubset x$. A \emph{subshift} is a subset $X$ of $A^\Z$ defined by forbidding a set of subwords $F \subset A^*$, in the sense that $X = \{x \in A^\Z \;|\; \forall u \in F: u \not\sqsubset x\}$. Subshifts are exactly the closed $\sigma$-invariant sets in $A^\Z$. An \emph{SFT} is a subshift where the defining set of forbidden words can be taken to be finite, and a \emph{sofic shift} is an image of an SFT under a shift-commuting continuous function between two full shifts.  The \emph{vertex shift} defined by the $n$-by-$n$ matrix $M$ over natural numbers is the SFT $\{x \in \{1,\ldots,n\}^\Z \;|\; \forall i: M_{x_i,x_{i+1}} = 1\}$; every SFT is topologically conjugate to a vertex shift.

Subshifts form a category, with \emph{morphisms} the shift-commuting continuous functions. The \emph{automorphism group} of a subshift $X$, denoted $\Aut(X)$, is the group of its shift-commuting self-homeomorphisms under function composition. More generally, the \emph{endomorphism monoid} $\End(X)$ consists of the shift-commuting continuous self-maps. A \emph{topological conjugacy} between two subshifts is a shift-commuting homeomorphism between them. Of course, when the domain is compact and the image is Hausdorff, it suffices to find a shift-commuting continuous bijection. If a property of dynamical systems is invariant under topological conjugacy, we say it is \emph{dynamical}.

More generally a (non-compact) \emph{dynamical system} is a topological space with a continuous $\Z$-action (or $\N$-action), and we use the same definitions of topological conjugacy, morphism, automorphism group and endomorphism monoid for such systems. Hochman's original problem deals with the systems $X' = \{x \in X \;|\; x \mbox{ is not periodic}\}$ where $X$ is a subshift. The restriction of $\sigma$ is of course well-defined and continuous on this set when we take the subspace topology for $X'$.

A subshift $X \subset A^\Z$ is \emph{transitive} if for all $u, v \sqsubset X$, there exists $w$ such that $uwv \sqsubset X$, and \emph{mixing} if for any $u,v$ such a word $w$ can be found of any large enough length. If $X \subset A^\Z$ is a subshift, then we can see the dynamical system $(X,\sigma^m)$ as a subshift by blocking consecutive $A$-words of length $m$ into a new product alphabet $A^m$ to get a subshift in $(A^m)^\Z$. The \emph{language} of a subshift $X \subset A^\Z$ is the set $\{u \in A^* \;|\; u \sqsubset X\}$. The \emph{entropy} of subshift $X$ is $\lim_n \frac{\log P_n(X)}{n}$ where $P_n(X)$ denotes the number of words of length $n$ in the language of $X$.

A useful construction for us is ``passing to a power of a shift''. Namely, if $X$ is a subshift, then for any $m > 0$, $(X, \sigma^m)$ is a dynamical system, and in fact itself a subshift in an obvious sense \cite{LiMa95}.

A \emph{periodic point} in $X \subset A^\Z$ is $x \in X$ satisfying $\sigma^n(x) = x$ for some $n > 0$. We similarly define periodic points of $X \subset A^\N$ (using the same formula) and $X \subset A^{-\N}$ (shifting in the other direction). An \emph{aperiodic point} is a point that is not periodic. For $a \in A$ write $a^\Z$ for the unique $\sigma$-fixed point $x \in A^\Z$ satisfying $x_0 = a$. Define $a^{\N}$ and $a^{-\N}$ similarly.

If $X \subset A^\Z$ is a subshift, $X_{\N} = \{y \in A^\N \;|\; \exists x \in X: \forall i \in \N: y_i = x_i\}$ is a closed set which is invariant under the map $\sigma : A^\N \to A^\N$, $\sigma(X) = X$, where $\sigma$ is defined by the same formula as in the two-sided case, i.e.\ it is an $\N$-subshift. We similarly define $X_{-\N}$, and functions $x \mapsto x_{\N} : X \to X_{\N}$ and $x \mapsto x_{-\N} : X \to X_{-\N}$ in the obvious way. An \emph{isolated} point in a subshift $X_\N$ is one that is topologically isolated. An important property of a subshift $X$ (in this paper) is that $X_{\N}$ has dense aperiodic points. The reader may want to check that this is a dynamical property of $X$.

If $X \subset A^\Z$ is a subshift, a word $w \sqsubset X$ is \emph{synchronizing} if whenever $x, y \in X$ and $x_{[\ell,\ell']} = w = y_{[\ell,\ell']}$, the point $z$ defined by
\[ z_i = \left\{ \begin{array}{ll}
x_i & \mbox{ if } i < \ell, \\
w_i & \mbox{ if } \ell \leq i \leq \ell', \\
y_i & \mbox{ if } i > \ell' \\
\end{array} \right. \]
is also in $X$; a point is synchronizing if it contains a synchronizing word. An important property of a subshift $X$ (in this paper) is that all its periodic points are synchronizing. The reader may want to check that this is a dynamical property of $X$.

\section{Two-sided subshifts}
\label{sec:TwoSided}

\begin{theorem}
\label{thm:Technical}
Let $X$ and $Y$ be two-sided subshifts. Suppose every periodic point $s \in X$ is synchronizing, $X_\N$ and $X_{-\N}$ have dense aperiodic points, and the same assumptions hold for $Y$. Then every topological conjugacy $\phi' : X' \to Y'$ is a domain-codomain restriction of a topological conjugacy $\phi : X \to Y$, and the choice of this $\phi$ is unique.
\end{theorem}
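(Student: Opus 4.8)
The plan is to extend $\phi'$ over the (dense) set of periodic points by a limiting procedure, and then to verify that the resulting map is a topological conjugacy. First I would record the easy consequences of the hypotheses. Because every periodic $s \in X$ is synchronizing and neither $X_\N$ nor $X_{-\N}$ has an isolated periodic point, each periodic $p \in X$ is a limit of aperiodic points: using the synchronizing window of $p$ to glue, and the non-isolation of $p_\N$ (resp.\ $p_{-\N}$) in $X_\N$ (resp.\ $X_{-\N}$) to perturb the right (resp.\ left) ray, one builds aperiodic $x^{(n)} \in X'$ with $x^{(n)}_{[-n,n]} = p_{[-n,n]}$, so $x^{(n)} \to p$. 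Hence $X'$ is dense in $X$ (and $Y'$ in $Y$), which immediately yields the uniqueness clause: two continuous extensions of $\phi'$ agree on the dense set $X'$, hence everywhere. It remains to produce one continuous extension.

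Next, a soft argument pins the limits to periodic points. If $x^{(n)} \in X'$ with $x^{(n)} \to p$ periodic and, passing to a subsequence by compactness of $Y$, $\phi'(x^{(n)}) \to q \in Y$, then $q$ must be periodic: otherwise $q \in Y'$, and continuity of $\psi' := (\phi')^{-1}$ at $q$ would give $x^{(n)} = \psi'(\phi'(x^{(n)})) \to \psi'(q) \in X'$, forcing $p = \psi'(q) \in X'$, a contradiction. So every subsequential limit of images of an aperiodic sequence tending to $p$ is periodic. To identify a canonical candidate for $\phi(p)$, I would fix a synchronizing window $\ell$ for $p$ and, using synchronization, choose an aperiodic $x$ whose right ray equals that of $p$ (an aperiodic left ray glued to $p_{[-\ell,\infty)}$). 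Writing $\pi$ for the period of $p$, one has $\sigma^{n\pi} x \to p$, so $\sigma^{n\pi}\phi'(x) = \phi'(\sigma^{n\pi} x)$ converges along a subsequence to a periodic $q$; unwinding this convergence shows that the right ray of $\phi'(x)$ is eventually a shift of $q$. This attaches a periodic orbit $\mathcal{O}(q) \subset Y$ to $\mathcal{O}(p)$, the proposed value of the extension on the orbit of $p$.

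The heart of the proof, and the main obstacle, is to show that this assignment is well defined and continuous; equivalently, that $\phi'$ is uniformly continuous near the periodic points. The difficulty is genuine: the removed periodic points are dense, so $p$ is not an isolated puncture and there is no purely topological extension-over-punctures argument — everything must come from shift-commutation together with synchronization. Here I would exploit the local product structure furnished by the synchronizing word $w = p_{[-\ell,\ell]}$: the clopen cylinder $[w] \cap X$ is homeomorphic to $\mathcal{L} \times \mathcal{R}$, the product of the sets of left and right rays extending $w$, the gluing being guaranteed by synchronization. The plan is to prove that $\phi'$ respects right-asymptoticity near $\mathcal{O}(p)$ — that points sharing a right ray are sent to points right-asymptotic to the single orbit $\mathcal{O}(q)$ — and symmetrically on the left, so that $q$ depends only on $\mathcal{O}(p)$ and not on the approximating sequence. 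The subtle point is that the naive attempt to transport asymptoticity by shifting is circular, since shifting drives the points into the periodic boundary where $\phi'$ is not yet controlled; breaking this requires combining the eventual periodicity extracted above with the matching behaviour of $\psi'$ on the corresponding rays in $Y$, so that the two maps jointly force a unique eventual period.

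Finally, with well-definedness in hand I would set $\phi|_{X'} = \phi'$ and $\phi(p) = q$ for periodic $p$, and check the remaining properties routinely: continuity at aperiodic points is inherited from $\phi'$, continuity at periodic points is exactly the well-definedness and uniform-approach statement just established, and $\sigma$-commutation passes to the limit. Running the identical construction for $\psi'$ yields a continuous shift-commuting $\psi : Y \to X$ extending $\psi'$; since $\phi \circ \psi$ and $\psi \circ \phi$ restrict to the identity on the dense sets $Y'$ and $X'$, they are the identity everywhere, so $\phi$ is a topological conjugacy restricting to $\phi'$. I expect the bulk of the work to sit in the well-definedness step, with the synchronizing hypothesis used for the local product structure and the one-sided no-isolated-periodic-point hypotheses used both for density and to guarantee that the rays needed in the comparison actually exist.
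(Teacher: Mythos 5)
There is a genuine gap, and you have in fact located it yourself: the entire difficulty of the theorem sits in the ``well-definedness'' step that your proposal describes as a plan but does not carry out. Everything before that point is correct but soft: density of $X'$ in $X$ (hence uniqueness of the extension), and the fact that any subsequential limit of $\phi'(x^{(n)})$ for $x^{(n)} \to p$ periodic must itself be periodic (by invertibility of $\phi'$). But your candidate value $q$ is extracted from one approximating sequence along one subsequence, and nothing in the proposal shows that a different aperiodic sequence converging to $p$ --- or even a different subsequence of the same one --- yields the same $q$. Even the intermediate assertion that ``the right ray of $\phi'(x)$ is eventually a shift of $q$'' does not follow from convergence along a single subsequence of $\sigma^{n\pi}\phi'(x)$; a priori different subsequences could track different periodic orbits of $Y$. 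You correctly note that transporting asymptoticity by shifting is circular and that some interplay between $\phi'$, $\psi'$, synchronization and shift-commutation must break the circle, but you do not exhibit the argument that does so, and this is exactly where the theorem could fail without the hypotheses (as the sofic and non-invertible counterexamples in Section~\ref{sec:Examples} show).

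For comparison, the paper closes this gap with a concrete three-stage argument, after reducing (by passing to powers of $\sigma$) to the case where $p = a^\Z$ is a fixed point and its image relation consists of fixed points. First, a compactness argument shows that sufficiently long runs of $a$ in any aperiodic point are mapped to runs of a single symbol, with bounded offset at the ends. Second, uniform continuity of $\phi'$ on the compact set of points of the form $\ldots a' a a a \ldots$ with $a' \neq a$ at a fixed coordinate shows that the constant symbol filling the image of a long $a$-run is a \emph{function} of the symbol breaking the run, giving a set $B$ of possible symbols for right-infinite $a$-tails broken on the left and a set $C$ for the mirror situation. Third --- and this is the step your proposal is missing --- synchronization of $a^\Z$ is used to glue a point realizing $b \in B$ to a point realizing $c \in C$ into a single aperiodic point, and shift-commutation applied to that glued point forces $b = c$; hence $B = C$ is a singleton $\{b\}$ and the image relation of $a^\Z$ is exactly $\{b^\Z\}$. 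Continuity of the resulting extension is then free via the closed graph theorem applied to the closure of the graph of $\phi'$, rather than needing a separate uniform-continuity verification as in your final paragraph. Until you supply an argument of the third kind, the proposal does not constitute a proof.
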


\begin{proof}
Suppose $X, Y \subset A^\Z$ for some finite alphabet $A$. The non-trivial direction is to show that a topological conjugacy $\phi' : X' \to Y'$ extends uniquely to a conjugacy $\phi : X \to Y$. Consider
\[ Z = \{(x, \phi'(x)) \;|\; x \in X\} \subset X' \times Y' \]
Let $R$ be the closure $\bar Z \subset X \times Y$, and let us think of it as a relation. Note that the system $\bar Z$, under the diagonal action $\sigma(x, y) = (\sigma(x), \sigma(y))$, can be seen naturally as a subshift over the alphabet $A^2$ (the closure of a shift-invariant set is shift-invariant).

We use relation notation, and write $x R = \{y \in Y \;|\; (x,y) \in \bar Z\}$ and $R y = \{x \in X \;|\; (x,y) \in \bar Z\}$, and similarly for subsets in place of $x$ and $y$. Note that we have $X R = Y$ and $R Y = X$ because aperiodic points are dense in $X$ and $Y$ (which follows from the corresponding assumptions on $X_\N$ and $Y_\N$). Our first claim is an immediate consequence of the fact $\phi'$ is a homeomorphism.

\begin{claim}
\label{clm:AperiodicSingleton}
The sets $xR$ and $Ry$ are singletons for any $x \in X'$ and $y \in Y'$.
\end{claim}

We now show that proving the same for periodic points suffices to prove the theorem.

\begin{claim}
\label{clm:IfSingleton}
If $xR$ and $Ry$ are singletons for every periodic point $x \in X$ and every periodic point $y \in Y$, then the result holds.
\end{claim}

\begin{claimproof}
Extend $\phi'$ to $X$ by mapping each periodic point $x$ to the unique element in $xR$, 
and extend $(\phi')^{-1}$ by mapping $y$ to the unique point in $Ry$. The two extensions are clearly inverses. It remains to show that the extension $\phi$ is shift-commuting and continuous. Continuity follows from the closed graph theorem, since by definition the graph $R$ of $\phi$ is closed, and $Y$ is a compact Hausdorff space. Shift-commutation is assumed on aperiodic points, and by density of aperiodic points and continuity it then also holds on periodic points. Finally, it is clear that the choice of the extension $\phi$ is unique, again because aperiodic points are dense.
\end{claimproof}

We now prove that $xR$ and $Ry$ are indeed singletons for any periodic points $x$, $y$, under the assumptions of the theorem. Since we prove this for all systems at once and the roles of $X$ and $Y$ are symmetric, it suffices to prove this for sets of the form $xR$. Furthermore, since $(X, \sigma^m)$ satisfies the dynamical assumptions whenever $(X, \sigma)$ does, it is enough to prove it for fixed points for the dynamics, by replacing $\sigma$ by $\sigma^m$ (for both $X$ and $Y$).

\begin{claim}
If $s \in X$ is a fixed point for $\sigma$, then $sR$ is a finite subshift.
\end{claim}

\begin{claimproof}
It is clear that $W = (\{s\} \times Y) \cap \bar Z$ is a subshift (under the diagonal action) as the intersection of two subshifts, and $sR$ is just the projection of $W$ to $Y$, thus it is a subshift. Suppose for a contradiction that this subshift is infinite. Every infinite subshift contains an aperiodic point, see \cite[Theorem~3.8]{BaDuJe08} (and the sentence after it). Thus, there is an aperiodic point $y \in sR$. It follows that $Ry$ contains both $s$ and the aperiodic point $(\phi')^{-1}(y)$, contradicting Claim~\ref{clm:AperiodicSingleton}.
\end{claimproof}

Now observe that if $sR$ is a finite subshift, there exists $m > 0$ such that $\sigma^m(y) = y$ for all $y \in sR$. If we further replace $\sigma$ by $\sigma^m$ (again both for $X$ and $Y$), $s$ will stay a fixed point, and $sR$ becomes pointwise stabilized by the dynamics. It is thus enough to prove that $sR$ is a singleton under this assumption.

The next crucial observation is that if $s = a^\Z$ and $sR$ is fixed pointwise by $\sigma$, then every long $a^*$-segment of an aperiodic point must be mapped under $\phi'$ to a long $b^*$-segment, for some symbol $b \in A$, and the length difference is bounded. This is a compactness argument.\footnote{Dynamically, we are using the fact that every finite dynamical system has the pseudo-orbit tracing property.} 

\begin{claim}
Suppose $a \in A$, $s = a^\Z$ and $\sigma(y) = y$ for all $y \in sR$. Then there exists $n$ such that for any $x \in X'$, if $x_{[i-n,i+1+n]} = a^{2n+2}$ for $i \in \Z$, then $\phi'(x)_i = \phi'(x)_{i+1}$.
\end{claim}

\begin{claimproof}
Suppose not. Then shifting such points $x$ by $\sigma^i$, for each $n$ we can find an aperiodic point $x^n$ such that $x^n_{[-n,1+n]} = a^{2n+2}$, and $\phi'(x)_0 \neq \phi'(x)_1$. But $x^n \rightarrow s$ as $n \rightarrow \infty$, and clearly no limit point of the sequence $\phi'(x^n)$ is fixed by $\sigma$, contradicting the assumption that $sR$ contains only fixed points.
\end{claimproof}

It is useful to once again replace $\sigma$ by a power: replacing it with $\sigma^n$ changes $n$ to $1$ in the previous claim: the symbol $a$ is replaced by the symbol $a^n \in A^n$, and if we see $a^n a^n a^n a^n$ in a configuration, then by applying the previous claim in every position, we see that the corresponding word in the image must be some $ub^{2n}v$ where $|u| = |v| = n$, i.e.\ the symbols are $u, b^n, b^n, v$, of which the central two are equal.

At this point, we may therefore assume that whenever $x_{[i,i+3]} = aaaa$, we have $\phi'(x)_{i+1} = \phi'(x)_{i+2}$. So far, we have not used any property of the subshift $X$ except the density of aperiodic points. The proof of the next claim crucially depends on the two dynamical properties assumed.

\begin{claim}
\label{clm:IsSingleton}
Suppose $s \in X$ is periodic. Then $sR$ is a singleton.
\end{claim}

\begin{claimproof}
As we have deduced, we may suppose $s = a^\Z$ for $a \in A$, $\sigma(y) = y$ for all $y \in sR$, and that whenever $x_{[i,i+3]} = aaaa$, we have $\phi'(x)_{i+1} = \phi'(x)_{i+2}$.

Consider the set $W$ of all points $x \in X'$ such that $x_i = a$ for all $i \in \N$ and $x_{-1} \neq a$. Note that $W$ is nonempty, as otherwise the point $a^{-\N}$ is isolated in $X_{-\N}$, which contradicts the assumption that aperiodic points are dense in $X_{-\N}$.

Observe that every point $z \in \phi'(W)$ satisfies $z_i = z_{i+1}$ for all $i \geq 1$. Let $B \subset A$ be the set of all possible symbols $z_1$ that appear. The set $W$ is compact so $\phi' : X' \to Y'$ is uniformly continuous in $W$.\footnote{To spell out a topological ``subtlety'' here, we mean more than just $\phi'|_W : W \to Y'$ being uniformly continuous, namely $\forall \epsilon > 0 : \exists \delta > 0: \forall w \in W, x \in X': d(w, x) < \delta \implies d(\phi'(w), \phi'(x)) < \epsilon$, where $d$ is the induced metric from any metric on $X$.} This means that there exists $m$ such that whenever $x \in X'$ and $x_{[-m,m]} = w_{[-m,m]}$ for some $w \in W$, we have $\phi'(x)_1 = \phi'(w)_1$. Note that the assumption ``$\exists w \in W: x_{[-m,m]} = w_{[-m,m]}$'' just means $x_{[0,m]} = a^{m+1}$, $x_{-1} \neq a$.

Replacing $\sigma$ with $\sigma^m$, we may assume that for all $x \in X'$, if $x_{[-1,2]} = a'aaa$ and $a' \neq a$ then $\phi'(x)_1 \in B$ is determined by $a'$. More precisely, there exists a function $F : A \setminus \{a\} \to B$ such that
\[ \forall x \in X': x_{[-1,2]} = a'aaa \wedge a' \neq a \implies \phi'(x)_1 = F(a'). \]
Now perform a left-right symmetric argument (and possibly replace $\sigma$ by a power yet again) to obtain that whenever $x_{[-2,1]} = aaaa'$ where $a' \neq a$, the symbol $\phi'(x)_{-1}$ is determined by the choice of $a'$. Let $C \subset A$ be, symmetrically with how $B$ was defined, the symbols that appear as such $\phi'(x)_{-1}$.

Applying the conclusions of the two paragraphs above on both sides of a finite interval (the latter through a shift), and also applying the property from the first paragraph (which does not disappear when passing to a power of the shift, as explained above the present claim), we deduce that whenever $x_{[j,j+k]} = a^{k+1}$ with $k \geq 2$ and $a \notin \{x_{j-1}, x_{j+k+1}\}$, we have $\phi'(x)_i = \phi'(x)_{i+1}$ for all $i \in [j+1,j+k-2]$, and the symbol $\phi'(x)_{j+1} = \phi'(x)_{j+k-1}$ is determined uniquely by both $x_{[j-1,j+2]}$ and by $x_{[j+k-2,j+k+1]}$.

Now we use the fact that $a^\Z$ is synchronizing. Of course, we have passed to a power of the shift, but the reader can easily verify that a synchronizing point stays synchronizing when we pass to a power of the shift. We obtain $\ell$ such that if $x,y \in X$ satisfy $x_{[0,\ell-1]} = y_{[0,\ell-1]} = a^{\ell}$, then the point $z$ defined by
\[ z_i = \left\{ \begin{array}{ll}
x_i & \mbox{ if } i < 0, \\
a & \mbox{ if } 0 \leq i < \ell, \\
y_i & \mbox{ if } i \geq \ell \\
\end{array} \right. \]
is also in $X$. 
By replacing $\sigma$ with $\sigma^{\ell}$, we may assume $\ell = 1$. 

Suppose that either $B$ or $C$ is not a singleton. Then in particular we can find $b \in B$ and $c \in C$ with $b \neq c$ such that for some $x, x' \in X$ we have $x_i = x'_{2-i} = a$ for all $i \geq 0$ and $x_{-1} \neq a$, $x'_3 \neq a$, and $\phi'(x)_1 = b \neq c = \phi'(x')_1$. Now apply the synchronization assumption with $\ell = 1$ to $x$ and $x'$, to obtain a point $x''$ with $x''_{[-1,2]} = x_{[-1,2]}$ and $x''_{[0,3]} = x'_{[0,3]}$. Suppose first that $x''$ is aperiodic. Then we have
\[ \phi'(x'')_1 = \phi'(x)_1 = b \neq c = \phi'(x')_1 = \phi'(x'')_1, \]
a contradiction.

If $x''$ is periodic, then we apply synchronization to $x$ and $\sigma^{-1}(x')$ (again with $\ell = 1$) to obtain a point $x'''$. The period of $x''$ cannot be $1$, because it contains the letter $a$ but is not equal to $a^\Z$, and the point $x'''$ is obtained from $x''$ by duplicating a single letter. Duplicating a single letter in a periodic point of period greater than $1$ always turns it aperiodic, so $x'''$ is aperiodic. Now a calculation analogous to that of the previous paragraph gives a contradiction.

We now have $B = C = \{b\}$ for some $b \in A$. This clearly implies $sR = \{b^\Z\}$, since in any aperiodic point close to $s$ the sequence of $a$s containing the origin eventually breaks either on the left or on the right, and a run of $b$s reaching the origin is forced by the assumptions.
\end{claimproof}

By Claim~\ref{clm:IfSingleton}, Claim~\ref{clm:IsSingleton} concludes the proof of the theorem.
\end{proof}

Theorem~\ref{thm:Main} follows directly, because any long enough word is synchronizing in an SFT, and one-sided infinite transitive SFTs do not have isolated points. For Theorem~\ref{thm:AutIso}, map $f \in \Aut(X)$ to $f|_{X',X'} : X' \to X'$. This is clearly well-defined and a group homomorphism. It is surjective by the first claim in Theorem~\ref{thm:Main}, and injective by the second, therefore it is a group isomorphism.

\section{One-sided subshifts}
\label{sec:OneSided}

Subshifts are two-sided in the paper of Hochman, so our Theorem~\ref{thm:Main} indeed solves his problem. However, as written, one can also interpret the problem in the one-sided category.

Directly removing the periodic points typically leads to a set which is not invariant for the shift action, so $\sigma$ is only a partial action. We again set $X' = \{x \in X \;|\; x \mbox{ is aperiodic}\}$. In this case, we take topological conjugacy to mean a homeomorphism that commutes with $\sigma$ whenever $\sigma$ is defined on $X'$, i.e.\ if both $x$ and $\sigma(x)$ are aperiodic, then $\sigma(\phi'(x)) = \phi'(\sigma(x))$ (in particular $\sigma(\phi'(x))$ must be aperiodic when the codomain of $\phi'$ is another subshift with periodic points removed).

Whether or not this one-sided interpretation is natural can be debated. Nevertheless, with this interpretation, we solve the problem of Hochman in a strong form, with a short proof.

\begin{theorem}
\label{thm:OneSided}
Suppose $Z \subset A^\Z$ is a subshift where every periodic point $s \in Z$ is synchronizing and neither $Z_{-\N}$ nor $Z_{\N}$ has an isolated periodic point, and let $X = Z_\N$. Then every shift-commuting continuous function $\phi' : X' \to Y'$ is a domain-codomain restriction of a shift-commuting continuous function $\phi : X \to Y$, and the choice of this $\phi$ is unique.
\end{theorem}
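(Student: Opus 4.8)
The plan is to adapt the proof of Theorem~\ref{thm:Technical}, identifying which steps break in the one-sided partial-action setting and which survive. First I would set up the analogous closure: given a shift-commuting continuous $\phi' : X' \to Y'$ (where $X = Z_\N$, $Y = Z'_\N$ for appropriate $Z$), form $W = \overline{\{(x,\phi'(x)) \mid x \in X'\}} \subset X \times Y$ and treat it as a relation $R$. The key structural difference from the two-sided case is that $\phi'$ is no longer assumed to be a homeomorphism, only a shift-commuting continuous map, and the shift is only a partial action. So the analog of Claim~\ref{clm:AperiodicSingleton} needs care: on aperiodic points $\phi'$ is an honest function, so $xR$ is still a singleton for $x \in X'$, but $Ry$ need not be. I expect the argument to be genuinely one-directional here, extending $\phi'$ to periodic points rather than building a two-sided bijection, and the final homomorphism/isomorphism claim becomes $\End(X) \cong \End(X')$ rather than $\Aut(X) \cong \Aut(X')$.

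Next I would reduce, exactly as before, to showing $sR$ is a singleton for each periodic $s$. The reductions "replace $\sigma$ by $\sigma^m$" and "assume $sR$ is fixed pointwise" go through verbatim, as does the compactness argument showing $sR$ is a finite subshift: an infinite subshift contains an aperiodic point $y$, whence $s, (\phi')^{-1}(y)$ would both lie in $Ry$ — but in the one-sided case I must instead argue that $y \in sR$ aperiodic forces $s = (\phi')^{-1}(y)$ directly from injectivity of $\phi'$ on aperiodic points, since $\phi'$ need not be invertible. The length-matching claim (long $a^*$ runs map to long $b^*$ runs with bounded length difference) is a pure compactness statement about $\phi'$ and survives unchanged; similarly the definition of the functions $F : A \setminus \{a\} \to A$ (via uniform continuity on the compact set $W$ of points $x$ with $x_i = a$ for $i \in \N$, $x_{-1} \neq a$) transfers, though here the relevant one-sided set is the right tail.

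The genuinely new obstacle is the \emph{left boundary}. In the two-sided proof one uses the left-right symmetric argument to define $C$ and the symbols $\phi'(x)_{-1}$, then glues two half-infinite $a$-runs via synchronization of $s = a^\Z$. In the one-sided world there is no left infinity: a point of $X = Z_\N$ has a genuine first coordinate at position $0$, so the left-run-breaking argument has no place to live. The hard part will be replacing the left-side analysis: I expect to show that for $x \in X'$ with a long run $x_{[0,k]} = a^{k+1}$, the synchronization of $a^\Z$ in the ambient two-sided $Z$ still lets me glue along the right-propagating run and pin $\phi'(x)_{1}$, and that the single relevant symbol set $B$ (now the only one) is forced to be a singleton by a synchronization-gluing contradiction analogous to the $B \neq C$ argument — here comparing two aperiodic points agreeing on a long central $a$-block but with differing images, using that both $x''$ aperiodic and the periodic fallback with $\ell \in \{2,3\}$ give contradictions. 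Concluding $B = \{b\}$ then forces $sR = \{b^\N\}$, and Claim~\ref{clm:IfSingleton}'s analog — extend $\phi'$ by sending each periodic $s$ to this $b^\N$, get continuity from the closed graph theorem since $Y$ is compact Hausdorff, and uniqueness from density of aperiodic points — finishes the theorem.
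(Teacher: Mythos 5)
There is a genuine gap, and it sits exactly where the one-sided hypothesis has to do its work. Your plan ports the two-sided machinery, and its first load-bearing step is the claim that $sR$ is a finite subshift for each periodic $s$; you justify this by saying an aperiodic $y \in sR$ would contradict ``injectivity of $\phi'$ on aperiodic points.'' But Theorem~\ref{thm:OneSided} is stated for \emph{arbitrary} shift-commuting continuous maps $\phi'$ (this is the point of the one-sided statement, which yields $\End(X) \cong \End(X')$ rather than only $\Aut$), so no injectivity is available --- you even note two sentences earlier that $R y$ need not be a singleton, and then invoke injectivity anyway. This is not cosmetic: the paper's ``midpoint marker'' example in Section~\ref{sec:Examples} shows that for non-invertible maps in the two-sided setting $sR$ can genuinely be infinite, so finiteness of $sR$ is precisely where one-sidedness must enter, and your argument does not use it there. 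Without finiteness, everything downstream (passing to $\sigma^m$ so that $sR$ is pointwise fixed, the run-length claim, the functions $F$) collapses.

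The second, related problem is the left boundary, which you correctly flag but do not resolve: the two-sided contradiction comes from comparing the symbol forced by a left break (the set $B$) with the one forced by a right break (the set $C$), and in $Z_\N$ there is only one side, so ``a synchronization-gluing contradiction analogous to the $B \neq C$ argument'' has nothing to compare. The paper's actual proof takes a different and much shorter route that handles both issues at once: if $sR$ is not a singleton for $s = a^\N$ (after passing to a power of $\sigma$), one finds for arbitrarily large $n$ prefixes $a^n u$ and $a^n v$ that force different image symbols at a fixed coordinate $i$; the hypothesis that $Z_{-\N}$ has no isolated periodic point supplies a point $b a^\N \in X$ with $b \neq a$, which is \emph{aperiodic} and hence lies in $X'$; synchronization of $a^\Z$ glues $b$ in front of the runs to produce aperiodic points $b a^{\ell} u z$ and $b a^{\ell} v z'$ converging to $b a^\N$, and shift-commutation moves the discrepancy from coordinate $i$ of $\phi'(a^{\ell}uz), \phi'(a^{\ell}vz')$ to coordinate $i+1$ of the images of the prepended points, contradicting continuity of $\phi'$ at the aperiodic point $b a^\N$. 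The key structural fact --- that the periodic point $a^\N$ has an aperiodic $\sigma$-preimage inside $X$ itself --- is what makes the theorem true for non-invertible maps, and it is absent from your outline.
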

 
Recall that our one-sided subshifts are by definition restrictions of two-sided subshifts (we require $\sigma(X) = X$); alternatively one can require $\sigma(X) \subset X$. In either case, a one-sided \emph{transitive} subshift $X$ (transitivity being defined with the same formula as for two-sided SFTs) with forbidden patterns $F$ is equal to $Z_\N$ where $Z$ is the two-sided subshift with forbidden patterns $F$. Thus, infinite transitive one-dimensional SFTs satisfy the assumptions of the theorem under either definition.

\begin{proof}
Define the relation $R \subset X \times Y$ analogously to the two-sided case. Again it is enough to show that for every periodic point $x$, $xR$ is a singleton (whose unique element is then automatically a periodic point). Suppose that this is not the case. Again passing to a power of the shift we may assume that for some $a \in A$ and some $i \in \N$, for arbitrarily large $n$ we can find $x, x' \in X'$ such that $x_{[0,n-1]} = x'_{[0,n-1]} = a^n$, $x_n \neq x'_n$, and $\phi'(x)_i \neq \phi'(x')_i$.\footnote{The fact $i$ can be taken to be $0$ does follow from shift-commutation, but this needs a small additional argument since $x,x'$ might be eventually periodic, and it does not simplify the rest of the proof.}

Consider such a fixed $n$, and observe that by uniform continuity of $\phi'$ on $\{x,x'\}$ we can find $m \in \N$ and words $u, v \in A^m$ such that $u_0 \neq v_0$, for any two points $y,y' \in X'$ such that $y_{[0,n+m-1]} = a^n u, y'_{[0,n+m-1]} = a^n v$ we have $\phi'(y)_i \neq \phi'(y')_i$, and at least one such pair $y, y'$ exists (i.e.\ $a^n u$ and $a^n v$ appear in the language of $X$).

By the assumption that $Z_{-\N}$ has no isolated periodic points, there exists a point of the form $ba^\N$ in $X$, for some $b \neq a$. By synchronization of $a^\Z$, assuming $n$ was large enough we have $ba^{\ell}uz, ba^{\ell}vz' \in X'$, for some $z, z' \in X$ and any $\ell \geq n$. By repeatedly applying the assumption that $Z_\N$ has no isolated periodic points, and synchronization of periodic points, we may assume $z, z' \in X'$, implying that also $a^{\ell}uz, a^{\ell}vz' \in X'$.

The same $b \neq a$ appears for infinitely many $n$, and from
\[ \phi'(ba^{\ell}uz)_{i+1} = \phi'(a^{\ell}uz)_i \neq \phi'(a^{\ell}vz')_i = \phi'(ba^{\ell}vz')_{i+1} \]
it follows that $\phi'$ is not continuous at $ba^\N \in X'$.
\end{proof}

\section{Counterexamples to stronger statements}
\label{sec:Examples}

In this section, we give some counterexamples to strengthenings of theorems proved in the above sections, and of Hochman's original problem.

These are all sofic shifts, and we use regular expressions to define some of them; in these expressions, $+$ denotes union, $^*$ denotes repetition (zero or more times), and juxtaposition is concatenation. The reader can pick up the precise definition in any reference on formal language theory. The soficity of the subshifts below is easy to see, but we remark that it is a general fact that a subshift $X$ is sofic if and only if for some regular language $L$, it is the unique smallest subshift whose language contains $L$ (and such a unique smallest subshift exists).

Theorem~\ref{thm:Technical} does not cover mixing sofic shifts, and indeed the conclusion fails there:

\begin{example}
\label{ex:First}
For a two-sided mixing sofic shift $X$, an automorphism $f' \in \Aut(X')$ may not be the restriction of an automorphism $f \in \Aut(X)$: Consider the smallest subshift whose language contains $((0^*+1^*) 2 (0^*+1^*) 3)^*$. Define $f'$ by swapping each maximal  occurrence of $2a^n$ with the word $2(1-a)^n$ and each (possibly overlapping)  maximal occurrence of $a^n3$ with the word $(1-a)^n3$, for $a \in \{0,1\}$.
\qee
\end{example}

Note that the proof of Theorem~\ref{thm:Main} shows in general that the ``image subshift'' $sR$ must be finite for a periodic point $s$. The above example shows it can indeed have positive cardinality, but in the example the period is the same. We give another simple example where the period is different. It also shows that Hochman's original problem has a negative answer for mixing sofic shifts.

\begin{example}
Consider the two-sided \emph{even shift} $X \subset \{0,1\}^\Z$, namely the smallest subshift whose language contains $(1(00)^*)^*$, and $Y \subset \{0,1,2\}^\Z$ the smallest subshift whose language contains $(2(01)^*)^*$. These are mixing sofic shifts. Define $f' : X \to Y$ by rewriting each subword $0^{2n}1$ to $(01)^n2$, and $10^{2n}$ to $2(01)^n$. Clearly this gives a topological conjugacy between $X'$ and $Y'$, but $f'$ is not the restriction of a topological conjugacy between $X$ and $Y$. Indeed, $X$ and $Y$ are not topologically conjugate at all, because $X$ has two points of period $1$, while $Y$ has only one. \qee
\end{example}

In fact, it is a classical fact that even a mixing SFT can be conjugate to a mixing sofic shift after the periodic points are removed (we thank Mike Boyle for pointing out this example).

\begin{example}
A \emph{near Markov} \cite{BoKr88} subshift $Y$ is a sofic shift such that in the canonical irreducible SFT cover $X$, in the canonical projection $\pi : X \to Y$ only finitely many points have multiple preimages. One example is the even shift $X$ from the previous example: its canonical cover is the \emph{golden mean shift} (the mixing SFT over alphabet $\{0,1\}$ with unique forbidden pattern $11$) with the projection $\pi(x)_i \equiv 1 - (x_i + x_{i+1}) \bmod 2$. Only the point $1^\Z$ has multiple preimages. In this case, clearly the points with multiple preimages are all periodic, in particular the canonical projection $\pi$ restricted to $X'$ is a topological conjugacy between $X'$ and $Y'$. \qee
\end{example}

In fact, also the subshift $X$ in Example~\ref{ex:First} is near Markov. Thus, while in that example we showed that $\Aut(X')$ contains elements $\Aut(X)$ does not, we have canonical isomorphisms $\Aut(X') \cong \Aut(Z) \cong \Aut(Z)$, where $Z$ is the canonical cover of $X$.

The one-sided result in Theorem~\ref{thm:OneSided} does not extend to the two-sided case. In fact in the two-sided case, the subshift $sR$ need not even be finite when invertibility of $\phi'$ is not assumed.

\begin{example}
\label{ex:NotForContinuousMaps}
There are two-sided mixing SFTs $X, Y$ and a shift-commuting continuous map $f' : X' \to Y'$ which is not the restriction of any continuous shift-commuting map $f : X \to Y$. From the full shift on symbols $\{0, 2\}$, map to the full shift on symbols $\{0,1,2\}$ by mapping $2$ to $2$, and between two $2$s, in a segment $2 0^n 2$ write $1$ at the midpoint between the $2$s (with an arbitrary tie-breaking rule). Tails of the form $20^\N$ and $0^{-\N}2$ are fixed. This clearly defines a shift-commuting continuous map $f' : X' \to Y'$, but no extension of $f'$ to $X$ is continuous at $0^\Z$. \qee
\end{example}

By modifying this construction, we obtain that there are uncountably many shift-commuting continuous maps from $X'$ to $Y'$, so the set of morphisms $\Hom(X', Y')$ is significantly larger than $\Hom(X, Y)$. A further modification shows that $\End(X')$ is uncountable, in particular the monoids $\End(X)$ and $\End(X')$ are not isomorphic.

One may also ask what happens if we remove a larger set than just the periodic points. In the one-sided case $X \subset A^\N$, it is tempting to remove all \emph{eventually periodic points}, i.e.\ points $x \in X$ whose orbit is finite, as the resulting set $X''$ is the largest shift-invariant set not containing any periodic points. We give a stronger example.

\begin{example}
Let $X = \{0,1,2\}^\N$ and let
\[ Y = X \setminus \{x \in X \;|\; \sum_i x_i < \infty\}. \]
Let $g \in \Aut(Y)$ be map that rewrites $a \in \{1,2\}$ to $3-a$ if the distance to the nearest symbol from $\{1,2\}$ on the right is odd, and otherwise does not. On $Y$, this map is continuous, because for all $x \in Y$ we can find an open neighborhood that specifies the positions of at least $n$ symbols from the alphabet $\{1,2\}$, and this determines the new values at at least the first $n-1$ positions. Clearly this map has no continuous extension to any point $x \in X \setminus Y$.

It is easy to see that the restriction of $g$ to $X''$ gives an element of $\Aut(X'')$, where $X''$ is $X$ without its eventually periodic points (note that $g$ is an involution and its definition clearly implies that the image of an eventually periodic point is eventually periodic), and there is clearly no continuous extension to $10^\N$ (for this we only need that points in $X''$ can begin with words $1 0^n a$ with $a \neq 0$, where $n$ is arbitrarily large and of arbitrary parity). \qee
\end{example}

In the two-sided case, one can remove points that have an eventually periodic right tail or left tail (or one can remove both types of points, or points that are of both types simultaneously), or one can remove points that agree with some periodic point in all but finitely many positions. The following example covers all these cases, by a similar argument as in the previous example.

\begin{example}
\label{ex:TwoSidedRemoval}
Let $X = \{0,1,2\}^\Z$ and let
\[ Y = X \setminus \{x \in X \;|\; \exists n \in \Z: x_n \in \{1,2\} \wedge \forall i \neq n: x_i = 0\} \]
(i.e.\ we remove only two orbits from $Y$). Let $g \in \Aut(Y)$ be map that rewrites $a \in \{1,2\}$ to $3-a$ if the distance to the nearest symbol from $\{1,2\}$ is odd, and otherwise does not. On $Y$, this map is continuous, because for all $x \in Y$ we can find an open neighborhood that specifies at least two non-zero coordinates, and this gives a bound on how far we have to look to deduce the $i$th coordinate of the image of $x$. Clearly this map has no continuous extension to any point $x \in X \setminus Y$. \qee 
\end{example}

\section{Open problems}
\label{sec:Open}


\begin{question}
How much do we need to remove from mixing SFTs $X, Y$ to make entropy a complete invariant for topological conjugacy? Is removing the eventually periodic points enough?
\end{question}

Slightly more precisely, let $\phi$ be a function that, given a subshift $X$, produces a subset of $X$ (with the intuition that some ``bad points'' are removed). For which $\phi$ do we have $h(X) = h(Y) \iff \phi(X) \cong \phi(Y)$ for mixing SFTs $X, Y$? Of course this is only interesting for ``natural'' choices of $\phi$, such as those listed above Example~\ref{ex:TwoSidedRemoval}.


We also note that Theorem~\ref{thm:Main} is truly about topological conjugacies, and says nothing about \emph{embeddings} (shift-commuting continuous injections) or \emph{factorings} (shift-commuting continuous surjections). There are also new topological subtleties, for example, at least as written the proof of Theorem~\ref{thm:Main} does not even deal with the case where $\phi : X' \to Y'$ is only a continuous shift-commuting bijection. To our knowledge, little is known about these issues for free parts of mixing SFTs with equal entropy. For example the following problems stay open.

\begin{question}
Let $X, Y$ be respectively the vertex shifts defined by $\begin{psmallmatrix} 1 & 1 \\ 1 & 1 \end{psmallmatrix}$ and $\begin{psmallmatrix} 0 & 1 & 1 \\ 1 & 0 & 1 \\ 1 & 1 & 0 \end{psmallmatrix}$. Is there an embedding relation between $X'$ and $Y'$? Does $X'$ factor onto $Y'$ (or even onto $Y$)?
\end{question}

It is clear that $Y$ factors onto $X$, so $Y'$ factors onto $X'$. However, $X$ does not factor onto $Y$, and neither embeds in the other, due to periodic point restrictions. (There \emph{does} exist a continuous shift-invariant \emph{map} from $X'$ into $Y'$, even though there does not exist one from $X$ to $Y$.)



In higher-dimensional settings, we do not know to what extent variants of Theorem~\ref{thm:Main} hold; some parts of our argument have direct analogs, some do not seem to. In the two-dimensional case, it is known that the set of totally aperiodic points in the binary full shift does not even admit a continuous shift-commuting map to the space of proper $3$-colorings of the standard grid \cite{GaJaKrSe18}.

Once there exists an isomorphism between two systems, it is of interest to try to understand the family of all isomorphisms, which boils down to the study of the automorphism group. We showed in Example~\ref{ex:First} that automorphisms of free parts of mixing sofic shifts are no longer canonically isomorphic to those of the original sofic shift (i.e.\ there may be new automorphisms that are not restrictions of old ones). However, for near-Markov sofics $X$ (such that as that of Example~\ref{ex:First}), $\Aut(X')$ is canonically isomorphic to the automorphism group of a mixing SFT, namely the canonical cover. What can be said about $\Aut(X')$ for a general mixing sofic shift $X$? Can we find a concrete characterization of its elements?

\section*{Acknowledgements}

We thank Mike Boyle, Nishant Chandgotia, Mike Hochman, Johan Kopra and the anonymous referee for helpful comments and suggestions. The author was supported by Academy of Finland grant 2608073211.

\bibliographystyle{plain}
\bibliography{../../../bib/bib}{}

\end{document}